\newtheorem{theorem}{Theorem}[section]
\newtheorem{lemma}[theorem]{Lemma}
\newtheorem{proposition}[theorem]{Proposition}
\newtheorem{remark}[theorem]{Remark}
\newtheorem{conjecture}[theorem]{Conjecture}
\newtheorem{problem}[theorem]{Problem}
\theoremstyle{definition}
\newtheorem{thmy}{Theorem}
\newtheorem*{note*}{Note}
\newcommand{\R}{\mathbb R}
\newcommand{\Rn}{{\mathbb R}^n}
\newcommand{\Sn}{\mathbb{ S}^{n-1}}
\newcommand{\Ha}{{{\cal H}^{n-1}}}
\newcommand\blfootnote[1]{%
  \begingroup
  \renewcommand\thefootnote{}\footnote{#1}%
  \addtocounter{footnote}{-1}%
  \endgroup
}
\begin{document}


\title{\bf A non-existence result for the $L_p$-Minkowski problem}
\date{}
\medskip
\author{Christos Saroglou}
\maketitle
\blfootnote{2020 Mathematics Subject Classification. Primary: 52A20; Secondary: 52A38, 52A39.}\blfootnote{Key words and phrases. convex bodies, $L_p$-Minkowski problem, non-existence, surface area measure.}
\begin{abstract}
We show that given a real number $p<1$, a positive integer $n$ and a proper subspace $H$ of $\Rn$, the measure on the Euclidean sphere $\Sn$, which is concentrated in $H$ and whose restriction to the class of Borel subsets of $\Sn\cap H$ equals the spherical Lebesgue measure on $\Sn\cap H$, is not the $L_p$-surface area measure of any convex body.  This, in particular, disproves a conjecture from [Bianchi, B\"or\"oczky, Colesanti, Yang, The $L_p$-Minkowski problem for $-n<p<1$, Adv. Math. (2019)].
\end{abstract}
\section{Introduction}
\hspace*{1.5em}Given a convex body $K$ in $\Rn$ and $p\in\R$, the $L_p$-surface area measure $S_p(K,\cdot)$ of $K$ is a Borel measure on the Euclidean sphere $\Sn$, defined by
$$dS_p(K,\cdot):=h_K^{1-p}dS(K,\cdot),$$
where $h_K$ is the support function of $K$ and $S(K,\cdot)$ is the surface area measure of $K$ (see Section 2 for definitions). The $L_p$-Minkowski problem (as of the existence) asks the following. 
\begin{problem}\label{problem}
Let $\mu$ be a given Borel measure on $\Sn$. Find necessary and sufficient conditions for $\mu$ to be the $L_p$-surface area measure of some convex body in $\Rn$.
\end{problem}
The case $p=1$ is the classical Minkowski problem (see Section 2). The problem for general $p$ was initiated by Lutwak \cite{lutwak_1993}, where it was solved in the even case, for $p\in(1,n)\cup(n,\infty)$. Problem \ref{problem} was subsequently studied by several authors and it is still of current interest. The $L_p$-Minkowski problem for $p>1$ in the non-symmetric case was treated in \cite{hug_lutwak_yang_zhang_2005}. Chou and Wang \cite{chou_wang_2006} studied the problem for $p\geq -n-1$, in the case where $\mu$ is absolutely continuous with respect to the spherical Lebesgue measure (and some extra regularity is assumed). A necessary and sufficient condition in the even case for $p=0$ was found in \cite{boroczky_lutwak_yang_zhang_2013}, while a sufficient condition was obtained in \cite{chen_li_zhu_2019} in the non-symmetric case, for $p=0$. For existence results in the case $p<0$, see \cite{bianchi_2019}, \cite{zhu_2017} and the references therein (see, also, \cite{haberl_lyz}, \cite{huang_lyz}, \cite{gauss} for other important variants of the Minkowski problem).

In the case $0<p<1$, it was shown in \cite{chen_li_zhu_2017} that if $\mu$ is a finite Borel measure on $\Sn$ that is not concentrated in a great subsphere, then there exists a convex body whose $L_p$-surface area measure equals $\mu$. Note that, as the example of a simplex with one vertex at the origin shows, $S_p(K,\cdot) $ (for $p<1$) may be concentrated in a proper subspace of $\Rn$ and, therefore, the condition mentioned previously is not necessary. A more general result (still in the case $0<p<1$) appeared in  \cite{bianchi_2019}, where the following conjecture was proposed.
\begin{conjecture}\label{conjecture} (Bianchi, Boroczky, Colesanti, Yang \cite{bianchi_2019})
Let
$0<p<1$ and $\mu$ be a finite Borel measure on $\Sn$ that is not concentrated in a pair of antipodal points. Then, there exists a convex body $K$ in $\Rn$, such that $$S_p(K,\cdot)=\mu.$$
\end{conjecture}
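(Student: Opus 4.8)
Since the conjecture asserts the \emph{existence} of a solution, the natural line of attack is the direct method of the calculus of variations, exploiting the fact that $S_p(K,\cdot)$ arises as the first variation of volume. Fix $n\gr 2$ and $0<p<1$, and consider the constrained extremal problem
\[
\sup\set{V(K)\;:\;K\ \text{a convex body},\ 0\in K,\ \int_{\Sn}h_K^p\,d\mu\ls 1}.
\]
(One works with bodies containing the origin so that $h_K\gr 0$ and $h_K^p$ is defined $\mu$-a.e.) The point is that if this supremum is attained at a nondegenerate body $K_0$ with $0\in\inter K_0$, then a Lagrange multiplier computation identifies the Euler--Lagrange equation. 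Using the variational formula $\frac{d}{dt}\big|_{t=0}V(K_t)=\int_{\Sn}f\,dS(K_0,\cdot)$ for a Wulff family with $h_{K_t}=h_{K_0}+tf+o(t)$, together with $\frac{d}{dt}\big|_{t=0}\int_{\Sn}h_{K_t}^p\,d\mu=p\int_{\Sn}h_{K_0}^{p-1}f\,d\mu$, stationarity forces $dS(K_0,\cdot)=\lambda\,p\,h_{K_0}^{p-1}\,d\mu$, i.e.\ $\mu=c\,S_p(K_0,\cdot)$ for some constant $c>0$. Because $S_p(\lambda K,\cdot)=\lambda^{n-p}S_p(K,\cdot)$ and $p<1<n$, replacing $K_0$ by a suitable dilate yields a convex body whose $L_p$-surface area measure is exactly $\mu$.

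The real content is to prove the supremum is finite and attained. First I would show that the non-concentration hypothesis makes $V$ bounded on the feasible set: a convex body with large volume must have large extent in many directions, so, unless $\mu$ charges only very few directions, the constraint $\int h_K^p\,d\mu\ls 1$ caps $V(K)$. The extreme bad case is $\mu=a\delta_{u_0}+b\delta_{-u_0}$, where an arbitrarily large ``pancake'' orthogonal to $u_0$ satisfies the constraint with unbounded volume; the hypothesis that $\mu$ is not concentrated in a pair of antipodal points is meant precisely to exclude this mode. Granting the bound, I would take a maximizing sequence $(K_j)$, fix its position by translating to the optimal location (at which the first-order condition $\int_{\Sn}h_{K_j}^{p-1}u\,d\mu(u)=0$ holds), and apply the Blaschke selection theorem to extract a Hausdorff-convergent subsequence $K_j\to K_0$. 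Upper semicontinuity of volume and continuity of $K\mapsto\int_{\Sn}h_K^p\,d\mu$ along the sequence would then show $K_0$ is feasible and attains the supremum.

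The main obstacle, and the step on which everything hinges, is nondegeneracy of the extremizer: one must rule out that the maximizing sequence collapses to a lower-dimensional convex set or pushes the origin to the boundary, for otherwise the Euler--Lagrange computation is invalid ($h_{K_0}$ vanishes on part of $\operatorname{supp}\mu$ and $h_{K_0}^{p-1}$ blows up). This is exactly where the strength of the non-concentration hypothesis must be extracted quantitatively: I would try to prove an estimate of the form $\int_{\Sn}h_K^p\,d\mu\gr c(\mu,p)\,(\operatorname{diam}K)^p$ valid whenever the direction of maximal extent of $K$ is not almost aligned with a pair of antipodal atoms of $\mu$, and combine it with a lower bound on $\operatorname{diam}K$ coming from the normalized volume, so as to trap $(K_j)$ between two fixed balls. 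Once $K_0$ is known to be full-dimensional with $0\in\inter K_0$, the first variation is justified, the equation $\mu=c\,S_p(K_0,\cdot)$ follows, and the dilation of the first paragraph finishes the argument. I expect this nondegeneracy estimate to be the crux: the weakness of the hypothesis --- it permits $\mu$ to concentrate on an entire great subsphere --- is precisely what makes the control of collapse delicate, and is the point a careful analysis would have to scrutinize most closely.
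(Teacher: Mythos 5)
There is a fundamental problem here: the statement you are trying to prove is a \emph{conjecture} that this very paper \emph{disproves} for $n\gr 3$. The main result (Theorem \ref{thm-main}) exhibits, for every $p<1$ and every subspace $H$ with $2\ls\dim H=k\ls n-1$, the measure $\mu_H$ (the $(k-1)$-dimensional Hausdorff measure on the great subsphere $\Sn\cap H$), which is manifestly not concentrated in a pair of antipodal points and yet is not $S_p(K,\cdot)$ for any convex body $K$. So no proof of the conjecture can exist in $\Rn$ for $n\gr 3$ (it is true only in the plane, where it was confirmed in \cite{trinh} and \cite{chen_li_zhu_2017}), and your argument must break somewhere. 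It breaks exactly at the step you yourself flag as the crux, and in fact earlier than you anticipate: for $\mu=\mu_H$ your variational problem is not even bounded. The constraint $\int_{\Sn}h_K^p\,d\mu_H\ls 1$ only sees $h_K$ on $\Sn\cap H$, i.e.\ only the projection $K|H$ (since $h_{K|H}=(h_K)|_H$). Taking $K_R=L+[-R,R]\,e$ with $L\subset H$ a fixed small $k$-dimensional body (fattened to a body in $\Rn$) and $e\in H^\perp$, the integral $\int_{\Sn\cap H}h_{K_R}^p\,d{\cal H}^{k-1}$ stays fixed while $V(K_R)\to\infty$. So the ``pancake'' obstruction you attribute only to the antipodal-pair case $a\delta_{u_0}+b\delta_{-u_0}$ in fact occurs for every measure concentrated in a proper subspace: elongation along $H^\perp$ is invisible to the constraint. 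Consequently the nondegeneracy estimate $\int_{\Sn}h_K^p\,d\mu\gr c(\mu,p)(\operatorname{diam}K)^p$ you hope for is false for such $\mu$, the supremum is $+\infty$, there is no maximizer, and the Euler--Lagrange step never gets off the ground. This is why the positive existence results in the literature (e.g.\ \cite{chen_li_zhu_2017}) assume $\mu$ is not concentrated in a great \emph{subsphere}, a strictly stronger hypothesis under which your scheme is essentially the standard and correct one.

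It is worth noting that no repair of the variational setup (renormalization, different functional, restricting translations) can succeed, because the paper proves non-existence outright rather than mere failure of one method: if $S_p(K,\cdot)=\mu_H$, then by Lemma \ref{l-general} there exist $u_0\in\Sn\cap H$ with $h_{K|H}(u_0)=0$ and $z=u_0+y$, $y\in H^\perp$, such that $\langle z,v\rangle\gr 0$ on the relevant parts of $\operatorname{supp}S(K,\cdot)$; combined with the strict positivity statement of Lemma \ref{l->0} applied to $v\mapsto\langle u_0,v\rangle h_{K|H}(v)^{-(1-p)}$ on $\Sn\cap H$, one gets $\int_{\Sn}\langle z,v\rangle\,dS(K,v)>0$, contradicting the barycentre condition in Minkowski's Existence and Uniqueness Theorem. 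So the correct deliverable for this statement is a counterexample, not a proof; your closing instinct that the hypothesis ``permits $\mu$ to concentrate on an entire great subsphere'' and that this is the point to scrutinize was precisely right, but the scrutiny ends in refutation rather than in an estimate.
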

Conjecture \ref{conjecture} was confirmed in the plane independently by \cite{trinh} and \cite{chen_li_zhu_2017}.
The main purpose of this note is to show that Conjecture \ref{conjecture} fails if $n\ge 3$. In fact, we prove a slightly more general result. Namely, for any $1\leq k\leq n-1$, there exists a finite Borel measure on $\Sn$, such that the linear span of its support is $k$-dimensional but the measure fails to be the $L_p$-surface area measure of any convex body (notice that by the result mentioned previously, the case $k=1$ is already known, while for $k=n$ it is impossible to find such a measure).  Moreover, our examples are valid for all $p<1$; other non-existence results, for $p\leq -n,$ appear for instance in \cite{chou_wang_2006} and \cite{du}. 

\begin{theorem}\label{thm-main}
Let $p<1$ and $H$ be a subspace of $\Rn$ of dimension $1\leq k\leq n-1$. If $\mu_H$ is the measure on $\Sn$ which is concentrated in $H$ and whose restriction to the family of Borel sets in $\Sn\cap H$ equals the $(k-1)$-dimensional Hausdorff measure on $\Sn\cap H$, then there exists no convex body $K$ in $\Rn$ satisfying 
$$S_p(K,\cdot)=\mu_H.$$
\end{theorem}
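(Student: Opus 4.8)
\medskip
\noindent\emph{A plan for the proof.}
The plan is to argue by contradiction, extracting from a hypothetical body $K$ with $S_p(K,\cdot)=\mu_H$ enough structure to contradict the fact that $\mu_H$ has a strictly positive \emph{constant} density with respect to $\mathcal H^{k-1}$ on $\omega:=\Sn\cap H$. Since $dS_p(K,\cdot)=h_K^{1-p}\,dS(K,\cdot)$ with $1-p>0$, the weight $h_K^{1-p}$ annihilates exactly the directions where $h_K$ vanishes. First I would fix the origin: if $o\in\inter K$ then $h_K>0$ on all of $\Sn$, so $S(K,\cdot)$ itself would be concentrated on the lower-dimensional great subsphere $\omega$, which is impossible for a convex body (its surface area measure cannot live in a closed hemisphere); and $o\notin K$ is excluded because then $h_K<0$ in some direction and $h_K^{1-p}$ is undefined. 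Hence $o\in\bd K$. Writing $\nu:=N(K,o)\cap\Sn=\{u\in\Sn:h_K(u)=0\}$ for the normal cone at $o$, the definition of $S_p$ then forces $\operatorname{supp}S(K,\cdot)\subseteq\omega\cup\nu$, with $dS(K,\cdot)=h_K^{p-1}\,d\mu_H$ on $\omega\setminus\nu$.

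Next I would read off the shape of $K$. Any boundary point carrying a normal $u\notin H$ has $h_K(u)=0$, so its supporting hyperplane passes through $o$; the segment from $o$ to such a point thus lies in $\bd K$, and consequently the part of $\bd K$ with normals outside $H$ is a cone with apex $o$. Let $D:=P_HK$ be the shadow of $K$ in $H$, let $\Gamma$ be the tangent cone of $K$ at $o$, and set $C:=P_H\Gamma=T(D,o)$, the tangent cone of $D$ at $o$, so that $C^{\circ}=N(K,o)\cap H=N_H(D,o)$. If $o\in\inter D$ then $C^{\circ}=\{o\}$, i.e.\ there is no horizontal normal at $o$; since $N(K,o)$ is pointed and meets $H$ only at $o$, its projection $P_{H^{\perp}}N(K,o)$ is again pointed, giving $w\in H^{\perp}$ with $\langle u,w\rangle\le0$ on $\omega\cup\nu$, a closed hemisphere, contradicting boundedness. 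Hence $o\in\bd D$, and I may pick a unit vector $\theta_*\in C^{\circ}=N_H(D,o)$. Then $h_D(\theta_*)=h_K(\theta_*)=0$, and the face $F(D,\theta_*)$ is contained in $\bd C$, where the fibres of $\Gamma$ are lower-dimensional; hence the fibre volume $g(x'):=\mathcal H^{n-k}\!\left(K\cap(x'+H^{\perp})\right)$ vanishes identically on $F(D,\theta_*)$.

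The heart of the matter is a local estimate near $\theta_*$. I would first establish the Fubini-type identity $dS(K,\cdot)|_\omega=g\,dS_D$, where $S_D$ is the surface area measure of the $k$-dimensional body $D$ inside $H$; this expresses the horizontal part of the Gauss map of $K$ as the fibration of $\bd K$ over $\bd D$. Then for the caps $B_\varepsilon:=B(\theta_*,\varepsilon)\cap\omega$ one has
\[
S_p(K,B_\varepsilon)=\int_{B_\varepsilon}h_D^{1-p}\,g\,dS_D ,
\]
and as $\varepsilon\to0$ the support points with normal in $B_\varepsilon$ converge to $F(D,\theta_*)$, on which \emph{both} $h_D$ (by Lipschitz continuity of the support function, since $h_D(\theta_*)=0$) and $g$ vanish. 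The goal is to show that this joint vanishing forces $S_p(K,B_\varepsilon)=o\!\left(\mathcal H^{k-1}(B_\varepsilon)\right)=o(\varepsilon^{k-1})$, contradicting $S_p(K,B_\varepsilon)=\mu_H(B_\varepsilon)\asymp\varepsilon^{k-1}$. The mechanism is transparent in the smooth, strictly convex model: there $h_D\asymp\varepsilon^{2}$, $g\asymp\varepsilon^{\,n-k}$ and $dS_D\asymp d\mathcal H^{k-1}$, so the $S_p$-density behaves like $\varepsilon^{\,2(1-p)+(n-k)}$, whose exponent is strictly positive for every $p<1$ and every $k\le n-1$; thus the density tends to $0$ rather than to the positive constant carried by $\mu_H$.

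I expect the main obstacle to be making this last step rigorous with no regularity: the reverse Gauss map of $D$ may be multivalued and $S_D$ may concentrate near $\theta_*$, so the crude bound using only $h_D\le L\varepsilon$ is too lossy for large $k$ and one must exploit the smallness of $g$ as well. The plan here is to use that $g^{1/(n-k)}$ is a concave, $1$-homogeneous function vanishing on $\bd C$, which yields a linear rate of decay of $g$ as $x'$ approaches $F(D,\theta_*)\subseteq\bd C$, and to combine this with a bound on $S_D(B_\varepsilon)$. Finally I would isolate the degenerate subcase in which $o$ is a vertex of $D$: there $\nu\cap\omega=N_H(D,o)\cap\omega$ already has positive $\mathcal H^{k-1}$-measure, while $S_p$ assigns it mass $0$ because $h_K=0$ on it, contradicting $S_p(K,\cdot)=\mu_H$ outright and requiring no asymptotics.
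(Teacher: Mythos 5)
Your structural stage reproduces, in dual language, what the paper establishes in its Lemma 3.4: from $S_p(K,\cdot)=\mu_H$ you correctly get $\mathrm{supp}\,S(K,\cdot)\subseteq(\Sn\cap H)\cup\{h_K=0\}$, hence $o\in\bd K$, the representation of $K$ as (cone)\,$\cap$\,(cylinder over $D:=K|H$), the fact that $o\in\bd D$ (your pointed-projection/Hahn--Banach argument is a sound variant of the paper's proof that $H^\perp\cap C=\{o\}$), and the degenerate subcase where $\{h_K=0\}\cap\Sn\cap H$ has positive ${\cal H}^{k-1}$-measure (the paper's observation ${\cal H}^{k-1}(T\cap H)=0$). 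But from there you diverge: the paper derives a \emph{global} contradiction with the barycentre condition in Minkowski's theorem, using two tools you do not have — Lemma 3.1 (a reflection/monotonicity inequality giving $\int_{\Sn}\langle v,u\rangle h_K(v)^q\,d\Ha(v)>0$ strictly, whenever $h_K(u)=0$ and $q<0$) and Proposition 3.2 (a choice of $u_0$ with not merely $h_{K|H}(u_0)=0$ but also $-u_0\in C|H$, which yields the sign conditions $\langle u_0,v\rangle\ge0$ on $T\cap H$ and $\langle u_0+y,v\rangle\ge0$ on $S$ needed to control the remaining parts of the integral). Your $\theta_*$ carries no such sign information, and you instead attempt a \emph{local} cap-density contradiction $S_p(K,B_\varepsilon)=o(\varepsilon^{k-1})$.

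That local route has a genuine gap at its heart, and the step you flag as "the main obstacle" is not merely technical. First, your claim that $g$ vanishes identically on $F(D,\theta_*)$ is false, and the justification offered ("the fibres of $\Gamma$ are lower-dimensional over $\bd C$") is not true: the boundary of an $n$-dimensional cone can contain full $(n-k)$-dimensional fibres. Concretely, within your own structural framework take $n=3$, $k=2$, $H=\mathrm{span}(e_1,e_2)$, and the cone $C=\{x:0\le x_3\le x_1\}\cap\{x:x_2\ge0\}$ with $\theta_*=-e_2$: the fibres over the face in $\{x_2=0\}$ are segments of length $x_1>0$. Geometrically this configuration just means $S(K,\cdot)$ has an atom at $\theta_*$, which the weight $h_K(\theta_*)^{1-p}=0$ annihilates; it is therefore perfectly consistent with $S_p(K,\cdot)$ being concentrated in $H$, produces no contradiction, and destroys the "linear decay of $g$ near $F(D,\theta_*)$" mechanism you propose. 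Second, with $g$ possibly bounded below near the face and $S_D$ free to concentrate near $\theta_*$, the only remaining input is $h_D\le R\varepsilon$ on $B_\varepsilon$, which gives $S_p(K,B_\varepsilon)=O(\varepsilon^{1-p})$ — insufficient against $\mu_H(B_\varepsilon)\asymp\varepsilon^{k-1}$ whenever $k\ge2$ and $p>2-k$, exactly the range you concede is open; indeed the paper's Remark 3.6 notes that the easy integrability obstruction only settles $p\le 1-k$, which is essentially the regime your crude bound reaches. (The Fubini-type identity $S(K,\omega')=\int_{\tau_D(\omega')}g\,d{\cal H}^{k-1}$ is plausible and provable, but it too is asserted rather than proved, and should be stated with $g$ as a function of boundary points of $D$, since the reverse Gauss map is multivalued precisely at $\theta_*$.) So as it stands the proposal proves the theorem only in the vertex subcase and, in effect, for $p$ sufficiently negative; the central estimate for general $p<1$, $k\ge2$ is unproven and, given the atom phenomenon above, it is not even clear it is true, whereas the paper's barycentre argument sidesteps all local density considerations.
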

We mention that it is well known that for $p\geq1$, $S_p(K,\cdot)$ cannot be concentrated in a proper subspace of $\Rn$. Therefore, Theorem \ref{thm-main} trivially holds for $p\geq 1$ as well.

Although we are unable to formulate a general conjecture, it seems plausible to us that Conjecture \ref{conjecture} should hold if the measure $\mu$ is additionally assumed to have finite support.

The proof of Theorem \ref{thm-main} will be given in Section 3. In Section 2, we fix some notation and collect some facts concerning convex sets, that will be used in the proof. 
\section{Background and notation}
\hspace*{1.5em}We denote by $\langle \cdot , \cdot\rangle$ the standard inner product in $\Rn$ and the origin by $o$. 
The Euclidean norm of a vector $x\in\Rn$ is denoted by $|x|$. We set $B_2^n=\{x\in\Rn:|x|\le 1\}$ for the unit ball in $\Rn$ and $\Sn=\{x\in\Rn:|x|=1\}$ for its unit sphere. For $v\in \Sn$ and $t\in\R$, set
$$H^-(v,t):=\{x\in\Rn: \langle x,v\rangle\leq t\}.$$
Let $A\subseteq \Rn$.
The notation $A|E$ stands for the orthogonal projection of $A$ onto a subspace $E$. We write $\partial A$ for the boundary of $A$. Set, also, $A^\perp:=\{x\in\Rn:\langle x,y\rangle=0,\ \forall y\in A\}$ for its orthogonal complement and $u^\perp:=\{u\}^\perp$, if $u\in\Rn$. Finally, for $a\ge0$, the $a$-dimensional Hausdorff measure will be denoted by ${\cal H}^a(\cdot)$.

Below, we collect some basic facts from convex geometry that will be needed in the proof of Theorem \ref{thm-main}. For more information, we refer to the books of Schneider \cite{schneider_2013} and Gardner \cite{gardner_2006}. 
A convex set $C$ is called a {\it convex cone}, if $rx\in C$, for all $r\geq 0$ and for all $x\in C$. Clearly, a set $C$ is a convex cone if and only if $ax+\beta y\in C$, for all $a,\beta\geq 0$ and for all $x,y\in C$. Notice that according to this definition, any non-empty convex cone contains the origin.

We say that a vector $u\in\Sn$ {\it supports} a convex set $C\subseteq \Rn$ at $w\in\partial C$, if $\langle u,x\rangle\leq \langle u,w\rangle$, for all $x\in C$. If $C$ happens to be a convex cone and $u\in\Sn$ supports $C$ at some boundary point of $C$, then it is clear that $\langle u,x\rangle\leq 0$, for all $x\in C$. That is, $u$ supports $C$ at $o$ (and $o\in\partial C$). In this case, we will briefly say that $u$ supports $C$.

Let $K$ be a {\it convex body} in $\Rn$ (that is, convex, compact, with non-empty interior). The {\it support function} $h_K$ of $K$ is defined by
$$h_K(x):=\sup\{\langle x,y\rangle:x\in K\},\qquad x\in\Rn.$$
It is clear by the definition that $h_K$ is convex and positively homogeneous, that is $h_K(tx)=th_K(x)$, for $t\ge 0$. Moreover, $h_K(o)=0$ and $h_K$ is non-negative (resp. strictly positive) on $\Rn\setminus\{o\}$ if and only if $K$ contains the origin (resp. $K$ contains the origin in its interior). The support function $h_{K|E}:E\to \R$ of the orthogonal projection of $K$ onto a subspace $E$ can be computed by
$$h_{K|E}=(h_K)|_E.$$

The {\it surface area measure} $S(K,\cdot)$ of $K$ is a (necessarily finite) Borel measure on $\Sn$, given by
$$S(K,\omega)=\Ha(\{x\in\partial K: \exists v\in\omega,\textnormal{ such that } \langle x,v\rangle=h_K(v)\}),\qquad\omega\textnormal{ is a Borel set in }\Sn.$$
Minkowski's Existence and Uniqueness Theorem states that a finite Borel measure $\mu$ is the surface area measure of a convex body $K$ in $\Rn$ if and only if $\mu$ is not concentrated in a proper subspace of $\Rn$ and the barycentre of $\mu$ is at the origin. Moreover, $K$ is unique up to translation.
\section{Proof of Theorem \ref{thm-main}}
\hspace*{1.5em}We start the proof of Theorem \ref{thm-main} with the following observation.
\begin{lemma}\label{l->0}
Let $K$ be a convex body in $\Rn$ that contains the origin and assume that for some $u\in\Sn$ it holds $h_K(u)=0$. If for some $q<0$, the function $v\mapsto \langle v,u\rangle h_K(v)^q$ is integrable on $\Sn$, then
$$\int_{\Sn} \langle v,u\rangle h_K(v)^q \, d\Ha(v)>0.$$
\end{lemma}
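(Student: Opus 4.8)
The plan is to exploit the reflection symmetry across the hyperplane $u^\perp$. Write $R\colon\Rn\to\Rn$ for the orthogonal reflection $R(v)=v-2\langle v,u\rangle u$, which fixes $u^\perp$, sends $u$ to $-u$, and satisfies $\langle R(v),u\rangle=-\langle v,u\rangle$. Being orthogonal, $R$ preserves $\Ha$ and interchanges the two open hemispheres $\{\langle\cdot,u\rangle>0\}$ and $\{\langle\cdot,u\rangle<0\}$, while the equator $\{\langle\cdot,u\rangle=0\}$ is $\Ha$-null. Since $o\in K$, the support function satisfies $h_K\ge0$; and $h_K(u)=0$ means precisely that $\langle u,x\rangle\le0$ for every $x\in K$, i.e. $u$ supports $K$ at $o$.

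First I would record the key monotonicity: for every $v$ with $\langle v,u\rangle>0$ and every $x\in K$,
\[
\langle R(v),x\rangle=\langle v,x\rangle-2\langle v,u\rangle\langle u,x\rangle\ge\langle v,x\rangle ,
\]
because $\langle v,u\rangle>0$ and $\langle u,x\rangle\le0$. Taking the supremum over $x\in K$ gives $h_K(R(v))\ge h_K(v)$ on the upper hemisphere. Next I would fold the integral onto the upper hemisphere: substituting $v=R(w)$ in the part over $\{\langle v,u\rangle<0\}$ and using $\langle R(w),u\rangle=-\langle w,u\rangle$ together with the $\Ha$-invariance of $R$, the two hemispherical contributions combine into
\[
\int_{\Sn}\langle v,u\rangle h_K(v)^q\,d\Ha(v)=\int_{\{\langle v,u\rangle>0\}}\langle v,u\rangle\bigl(h_K(v)^q-h_K(R(v))^q\bigr)\,d\Ha(v).
\]
The integrability hypothesis guarantees that each hemispherical integral is finite, so this splitting and recombination is legitimate; it also forces the set where $h_K(v)=0$ and $\langle v,u\rangle>0$ to be $\Ha$-null, since there the integrand would be $+\infty$.

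Now since $q<0$ the map $t\mapsto t^q$ is strictly decreasing on $(0,\infty)$, so the monotonicity $0<h_K(v)\le h_K(R(v))$ (valid a.e. on the upper hemisphere) yields $h_K(v)^q-h_K(R(v))^q\ge0$. Hence the last integrand is nonnegative a.e., and it remains only to produce a set of positive measure on which it is strictly positive. For this I would look near $v=u$: there $R(u)=-u$, and choosing any interior point $x_0$ of $K$ (which exists as $K$ is a convex body) we have $\langle u,x_0\rangle<0$, since interior points cannot lie on the supporting hyperplane $u^\perp$; therefore $h_K(-u)\ge-\langle u,x_0\rangle>0=h_K(u)$, giving a strict gap $h_K(R(u))>h_K(u)$ at $v=u$. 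Since $v\mapsto h_K(R(v))-h_K(v)$ is continuous and $u$ lies in the open upper hemisphere, this strict inequality persists on an open neighbourhood of $u$ of positive $\Ha$-measure, on which (away from the null set $\{h_K=0\}$) the integrand is strictly positive. A nonnegative integrand that is positive on a set of positive measure has strictly positive integral, which is exactly the claim.

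The main obstacle, I expect, is not the sign and monotonicity bookkeeping but the careful handling of integrability and the possible blow-up of $h_K^q$ where $h_K$ vanishes: one must verify that $\{h_K=0\}$ meets the upper hemisphere in an $\Ha$-null set so that the pointwise inequalities hold almost everywhere and the recombination of the two finite hemispherical integrals is justified. The strictness is comparatively easy once one notices that $K$ having nonempty interior forces $h_K(-u)>0$.
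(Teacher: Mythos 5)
Your proof is correct, and it shares the paper's central mechanism---the reflection across $u^\perp$ combined with the monotonicity of $t\mapsto t^q$ for $q<0$ and the fact that $h_K(u)=0$ forces $\langle u,x\rangle\le 0$ on $K$---but you implement it genuinely differently. The paper never folds on the sphere: it transfers the spherical integral to the solid ball via polar coordinates with the auxiliary weight $e^{-1/|x|}$ (inserted so that the radial factor $\int_0^1 r^{n+q}e^{-1/r}\,dr$ is finite for every $q<0$), then slices the ball by the hyperplanes $H(t)=u^\perp+tu$ and compares the slices at heights $t$ and $-t$ via Fubini. You instead fold the two open hemispheres directly through $R(v)=v-2\langle v,u\rangle u$ and the invariance of $\Ha$ under orthogonal maps, which keeps the whole argument on $\Sn$ and is shorter; your remark that the integrability hypothesis forces $\{h_K=0\}\cap\{\langle\cdot,u\rangle>0\}$ to be $\Ha$-null is exactly the point requiring care (this set can genuinely have positive measure, e.g.\ for a simplex with a vertex at $o$, in which case the lemma's hypothesis simply fails), and you handle it correctly, including the legitimacy of recombining the two finite hemispherical integrals. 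The strictness step also differs in kind: the paper runs an equality-case analysis---equality in the sliced inequalities for almost every $t$ propagates, via continuity of $h_K$, to $h_K(-u)=h_K(u)=0$, contradicting non-empty interior---whereas you exhibit an explicit cap of positive measure around $u$ on which the folded integrand is strictly positive, using $h_K(-u)\ge -\langle u,x_0\rangle>0$ for an interior point $x_0$ and continuity of $v\mapsto h_K(R(v))-h_K(v)$. Your route is more elementary and more quantitative (it localizes where positivity comes from); the paper's ball-and-slices route avoids pointwise a.e.\ manipulation on the sphere but gains nothing essential here.
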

\begin{proof} For $t\geq 0$, define the hyperplane $H(t):=u^\perp+tu$. Set, also, $H^+:=\{x\in\Rn:\langle x,u\rangle\geq 0\}$ and $H^-:=\{x\in\Rn:\langle x,u\rangle\leq 0\}$.

Using the fact that the function $v\mapsto \langle v,u\rangle h_K(v)^q$ is integrable on $\Sn$ and integrating in polar coordinates, yields
\begin{eqnarray*}
\int_{B_2^n\cap H^\pm}\langle x,u\rangle h_K(x)^qe^{-1/|x|}\, dx&=&\int_{\Sn\cap H^\pm}\int_0^1r^{n-1}\langle rv,u\rangle h_K(rv)^qe^{-1/|rv|}\, dr\, d\Ha(v)\\
&=&\int_0^1r^{n+q}e^{-1/r}\, dr\cdot\int_{\Sn\cap H^\pm}\langle v,u\rangle h_K(v)^q\, d\Ha(v).
\end{eqnarray*}
Thus, $\int_{B_2^n\cap H^\pm}\pm\langle x,u\rangle h_K(x)^qe^{-1/|x|}\, dx<\infty$ and we need to prove that 
\begin{equation}\label{eq-l->0-1}
\int_{B_2^n\cap H^+}\langle x,u\rangle h_K(x)^qe^{-1/|x|}\, dx>-\int_{B_2^n\cap H^-}\langle x,u\rangle h_K(x)^qe^{-1/|x|}\, dx.
\end{equation}

Let $z\in u^\perp$ and $t>0$. Observe that
\begin{equation}\label{eq-for-remark}h_K(z+tu)\leq h_K(z-tu)+2th_K(u)=h_K(z-tu)\end{equation}
and, hence,
\begin{equation*}
h_K(z+tu)^q e^{-1/|z+tu|}\geq h_K(z-tu)e^{-1/|z-tu|}.
\end{equation*}
This shows that for $0<t<1$, it holds
\begin{eqnarray}\label{eq-l-general-metamain}\int_{[B_2^n\cap H(t)]-tu}h_K(z+tu)^qe^{-1/|z+tu|}\, dz&\geq& \int_{[B_2^n\cap H(t)]-tu}h_K(z-tu)^qe^{-1/|z-tu|}\, dz\nonumber\\
&=&\int_{[B_2^n\cap H(-t)]+tu}h_K(z-tu)^qe^{-1/|z-tu|}\, dx.
\end{eqnarray}
Let $t\in(0,1)$ be such that the function $z\mapsto h_K(z+tu)^qe^{-1/|z+tu|}$ is integrable on $[B_2^n\cap H(t)]-tu$. Then it is clear that equality holds in (\ref{eq-l-general-metamain}) if and only if equality holds in (\ref{eq-for-remark}), for all $z\in [B_2^n\cap H(t)]-tu$.

Notice that (\ref{eq-l-general-metamain}) can be equivalently written as
\begin{equation}\label{eq-l_>0-2}
\int_{B_2^n\cap H(t)}h_K(y)^qe^{-1/|y|}\, dy\geq \int_{B_2^n\cap H(-t)}h_K(y)^qe^{-1/|y|}\, dy,
\end{equation}
for all $0<t<1$. 
Therefore, by Fubini's Theorem, we obtain
\begin{eqnarray*}
\int_{B_2^n\cap H^+} \langle x,u\rangle h_K(x)^qe^{-1/|x|}\, dx&=&\int_0^1t\int_{B_2^n\cap H(t)}h_K(y)^qe^{-1/|y|}\, dy\, dt\\
&\geq& \int_0^1 t\int_{B_2^n\cap H(-t)}h_K(y)^qe^{-1/|y|}\, dy\, dt\\
&=& \int_{B_2^n\cap H^-}-\langle x,u\rangle h_K(x)^qe^{-1/|y|}\, dx.
\end{eqnarray*}Assume, now, that equality holds in the previous inequality. Then equality holds in (\ref{eq-l_>0-2}) (and therefore in (\ref{eq-l-general-metamain})) for almost every $t\in(0,1)$. Consequently, equality must hold in (\ref{eq-for-remark}) for almost every $t\in(0,1)$ and for all $z\in[B_2^n\cap H(t)]-tu $. Given this and the continuity of $h_K$, we conclude that $h_K(-u)=h_K(u)=0$, which is a contradiction because $K$ is assumed to have non-empty interior. The validity of (\ref{eq-l->0-1}) follows.
\end{proof}
We will need the following statement concerning convex cones.
\begin{proposition}\label{prop}

Let $R$ be closed convex cone in $\Rn$, which is supported by at least one unit vector contained in the linear span of $R$ (that is, $R$ is not a subspace of $\Rn$). Then, there exists $u_0\in\Sn\cap (-R)$, such that $u_0$ supports $R$.
\end{proposition}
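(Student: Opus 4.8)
The plan is to peel off the lineality space of $R$ so as to reduce to a \emph{pointed} cone, and then to obtain the supporting vector $u_0$ as (minus) the direction of the point nearest to the origin in the convex hull of the unit vectors of that pointed cone. Concretely, I would first set $L:=R\cap(-R)$, the lineality space (the largest linear subspace contained in $R$), and record the orthogonal decomposition $R=L\oplus K$ with $K:=R\cap L^\perp$. Here $K$ is a closed convex cone in $L^\perp$ and it is pointed, i.e. $K\cap(-K)=\{o\}$; the decomposition follows from the fact that a convex cone is closed under addition (if $x\in R$ and $x_L\in L$ is its $L$-component, then $x-x_L\in R$ since $-x_L\in L\subseteq R$, so the $L^\perp$-component $x-x_L$ lies in $K$). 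Crucially, since by hypothesis $R$ is not a subspace we have $L\neq R$, and therefore $K\neq\{o\}$.

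Next I would reformulate the target. A unit vector $u_0$ lies in $-R$ and supports $R$ precisely when $w:=-u_0\in R$ and $\langle w,x\rangle\ge 0$ for all $x\in R$. As $L$ is a subspace contained in $R$, any such $w$ is forced to be orthogonal to $L$, hence $w\in K$, and then the inequality over $R=L\oplus K$ reduces to $\langle w,k\rangle\ge 0$ for all $k\in K$. Thus it is enough to exhibit a unit vector $w\in K$ lying in the dual cone of $K$; the vector $u_0:=-w$ then satisfies $-u_0=w\in K\subseteq R$, so $u_0\in\Sn\cap(-R)$, and it supports $R$.

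To construct such a $w$, I would look at the compact set $S_K:=K\cap\Sn$ (nonempty because $K\neq\{o\}$) and its convex hull $C:=\operatorname{conv}(S_K)\subseteq K$. The key geometric input is that a pointed closed cone lies in an open halfspace through the origin, together with $o$; consequently $o\notin C$. Let $c^\ast$ be the point of $C$ nearest the origin; then $c^\ast\neq o$, and the variational inequality for the metric projection gives $\langle c^\ast,c\rangle\ge |c^\ast|^2>0$ for every $c\in C$, in particular for every $c\in S_K$, whence $\langle c^\ast,k\rangle\ge 0$ for all $k\in K$ by homogeneity. Since $c^\ast\in C\subseteq K$, the normalized vector $w:=c^\ast/|c^\ast|$ is exactly what was needed.

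The main obstacle is that an \emph{arbitrary} supporting unit vector of $R$ need not lie in $-R$: a supporting hyperplane at a boundary ray only yields a normal in the dual cone of $R$, with nothing pushing it into $-R$. Two moves resolve this tension. Removing the lineality space is essential, for if $R$ contained a line then $S_R:=R\cap\Sn$ would contain an antipodal pair, the origin would lie in $\operatorname{conv}(S_R)$, and the nearest-point construction would degenerate; passing to the pointed part $K$ is what guarantees $o\notin C$. Taking the nearest point of $C$ to the origin is then precisely the device that produces a single vector lying simultaneously in $K$ (so that $-u_0\in R$) and in the dual cone of $K$ (so that $u_0$ supports $R$), reconciling the two competing requirements that foil the naive approach.
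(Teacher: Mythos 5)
Your proof is correct, and it takes a genuinely different route from the paper. The paper's argument is variational: after reducing to the full-dimensional case and invoking an approximation argument to assume the cone is ``strictly convex'' off its rays, it maximizes $\langle -u,v\rangle$ over pairs where $u$ supports $R$ and $v\in R\cap\Sn$, and rules out a maximum value $A<1$ by two first-order perturbation computations (differentiating $\langle -u_0+tu,v_0\rangle/|-u_0+tu|$ at $t=0$). You instead recast the conclusion in dual-cone language --- one needs a unit vector in $R\cap R^{*}$, where $R^{*}$ is the dual cone --- then split off the lineality space $L=R\cap(-R)$, observe that any candidate $w$ must lie in the pointed part $K=R\cap L^{\perp}$, and produce $w$ as the normalized nearest point to the origin of $\operatorname{conv}(K\cap\Sn)$, using pointedness to guarantee $o\notin\operatorname{conv}(K\cap\Sn)$ and the metric-projection variational inequality $\langle c^{\ast},c\rangle\ge|c^{\ast}|^2>0$ to get membership in the dual cone. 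In effect you prove the standard fact that a pointed closed convex cone meets its dual cone in more than the origin, which is exactly what the proposition asserts after peeling off $L$. Your route buys several things: it needs no reduction to full dimension, no approximation argument (whose limit-passage the paper leaves implicit and which is the most delicate step there), and no differentiation --- everything is compactness, projection, and cone algebra; it also yields the slightly stronger conclusion $\langle -u_0,x\rangle>0$ for $x\in K\setminus\{o\}$. The paper's approach is self-contained within its own toolkit but longer and more fragile. One cosmetic remark: your appeal to the fact that a pointed closed cone lies in an open halfspace together with $o$ can be bypassed, since $o\in\operatorname{conv}(K\cap\Sn)$ would directly give $o=\sum\lambda_i v_i$ and hence $-\lambda_1 v_1=\sum_{i\ge2}\lambda_i v_i\in K$, contradicting $K\cap(-K)=\{o\}$; but citing the fact is perfectly legitimate, so this is a simplification, not a correction.
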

\begin{proof}
We may, clearly, assume that $R$ is full dimensional. Furthermore, we may assume by a standard approximation argument that if $x,y$ are boundary points of $R$, then $ax+\beta y$ is an interior point of $R$, provided that $a^2+\beta^2>0$. Set
$$A:=\sup_{(u,v)\in (\Sn)^2}\{\langle -u,v\rangle:u\textnormal{ supports }R\textnormal{ and }v\in R\}.$$
It follows by compactness that there exists a pair $(u_0,v_0)\in(\Sn)^2$, such that $u_0$ supports $R$, $v_0\in R$ and $\langle -u_o,v_0\rangle=A$.
We need to prove that $A=1$. Let us assume that $A<1$ instead.

{\it Claim 1.} There exists a unique vector $v_1\in u_0^\perp\cap R\cap \Sn$. \\
{\it Proof.} First assume that there exist two distinct vectors $v_1,v_1'\in u_0^\perp\cap R\cap \Sn$. Then, $v_1+v_1'\in u_0^\perp\cap R$. Since $u_0^\perp$ is a supporting hyperplane of $R$, it follows that $u_0^\perp\cap R\subseteq \partial R$, hence all $v_1$, $v_1'$, $v_1+v_1'$ are contained in $\partial R$, which contradicts the previous assumption. This shows that if there exists such $v_1$, then this must be unique.

Let us show the existence of $v_1$. If $u_0^\perp\cap R\cap \Sn=\emptyset$, then by compactness, there exists $0<c<1$, such that $\langle-u_0,v\rangle>c$, for all $v\in\Sn\cap R$. Thus, there exists $t_0>0$, such that for all $t\in(-t_0,t_0)$, for all $v\in\Sn\cap R$ and for all $u\in \Sn$, it holds $$\langle -(u_0-tu),v\rangle>0.$$Consequently, if $u\in \Sn$, then for all $t\in(-t_0,t_0)$, the unit vector $(u_0-tu)/|u_0-tu|$ supports $R$. Thus, 
$$\left(\frac{\langle -u_0+tu,v_0\rangle}{|-u_0+tu|}\right)'_{t=0}=0.$$Since
\begin{equation}\label{eq-prop-1}
\left(\frac{\langle -u_0+tu,v_0\rangle}{|-u_0+tu|}\right)'_{t=0}=\langle u,v_0\rangle-\langle u_0,v_0\rangle\langle u,u_0\rangle
\end{equation}
(this holds even if we do not assume $u$ to be a unit vector), we conclude that 
$$\langle u,v_0\rangle=\langle u_0,v_0\rangle\langle u,u_0\rangle,$$for all $u\in\Sn$. This shows that $v_0=\pm u_0$, which is a contradiction, because we assumed that $A<1$ and because $u_0\not\in R$. Our claim follows. $\square$

{\it Claim 2.} $\langle v_0,v_1\rangle\geq 0$.\\
{\it Proof.} For $t>0$, $v_0+tv_1\in R$, thus
$$\left(\frac{\langle -u_0,v_0+tv_1\rangle}{|v_0+tv_1|}\right)'_{t=0^+}\leq 0.$$Eqs (\ref{eq-prop-1}), then, shows that 
$$\langle -u_0,v_1\rangle-\langle-v_0,-u_0\rangle\langle v_1,-v_0\rangle\leq 0$$
or equivalently (since $\langle u_0,v_1\rangle=0$ and $\langle u_0,v_0\rangle<0$), $\langle v_0,v_1\rangle\geq 0$. $\square$

To finish with our proof set
$$u_s:=\frac{u_0-s(v_0+v_1)}{|u_0-s(v_0+v_1)|}.$$
We claim that $u_s$ supports $R$, if $s>0$ and $s$ is sufficiently small. To see this, notice that since $u_0\not\in R$, it holds $u_0-s(v_0+v_1)\neq o$, for all $s>0$. Moreover, since $v_0\neq-v_1$, we have $\langle v_1,v_0+v_1\rangle>0$. By continuity, there exists $t>0$, such that $\langle v, v_0+v_1\rangle>0$, for all $v\in\Sn\cap R$ with $|v-v_1|<t$. This shows that $$\langle -(u_0-s(v_0+v_1)),v\rangle\geq s\langle v,v_0+v_1\rangle>0,$$ for all $s>0$ 
and for all $v\in\Sn\cap R$ with $|v-v_1|<t$. On the other hand, since $\langle -u_0,v\rangle>0$, for all $u\in(\Sn\cap R)\setminus\{v_1\}$, there exists $s_0>0$, such that 
$$\langle -u_0+s(v_0+v_1),v\rangle>0,$$for all $v\in \Sn\cap R$ with $|v-v_1|\geq t$ and for all $0<s<s_0$. Consequently, for $0<s<s_0$ and for $v\in\Sn\cap R$, it holds $\langle -(u_0-s(v_0+v_1)),v\rangle\geq 0$. That is, $u_s$ supports $R$. 

As a consequence, we have
$$(\langle -u_s,v_0\rangle)'_{s=0^+}\leq 0,$$which by (\ref{eq-prop-1}) gives
$$\langle v_0+v_1,v_0\rangle-\langle u_0,v_0\rangle \langle v_0+v_1, u_0\rangle\leq 0.$$
Since $\langle v_1,u_0\rangle=0$ and since by Claim 2, $\langle v_0+v_1,v_0\rangle\geq |v_0|^2=1$, we find
$$1-\langle u_0,v_0\rangle^2\leq 0,$$which is impossible by the assumption $A<1$ and the fact that $u_0\not\in R$. This completes our proof.
\end{proof}
The following well known fact will also be needed.
\begin{lemma}\label{l-sch}
For any convex body $K$ in $\R^n$, it holds
$$K=\bigcap_{v\in supp\, S(K,\cdot)}H^-(v,h_k(v)).$$
\end{lemma}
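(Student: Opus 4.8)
Here is how I would approach Lemma \ref{l-sch}. One inclusion is immediate: for every $x\in K$ and every $v\in\Sn$ one has $\langle x,v\rangle\le h_K(v)$, so $K\subseteq H^-(v,h_K(v))$ for all $v$, and in particular $K\subseteq L$, where I write $L:=\bigcap_{v\in\operatorname{supp}S(K,\cdot)}H^-(v,h_K(v))$. The plan is therefore to prove the reverse inclusion $L\subseteq K$ by contradiction: assume there is a point $x_0\in L\setminus K$. Fixing any $p\in\inter K$ and running along the segment $[p,x_0]$, I would let $z$ be the last point of this segment that still lies in $K$; since $x_0\notin K$ and $p\in\inter K$, the point $z$ is a boundary point of $K$ and a proper convex combination of $p$ and $x_0$. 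Because $p\in\inter K\subseteq\inter L$ and $x_0\in L$, convexity forces $z\in\inter L$, so there is $\rho>0$ with $z+2\rho B_2^n\subseteq L$.

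Next I would study the patch $\Sigma:=\partial K\cap(z+\rho B_2^n)$. Since $z\in\partial K$, this is a relatively open piece of $\partial K$ of positive $\Ha$-measure (locally $\partial K$ is the graph of a convex, hence Lipschitz, function over the supporting hyperplane at $z$, and the orthogonal projection of $\Sigma$ covers an $(n-1)$-dimensional ball, so $\Ha(\Sigma)>0$). The key step is the claim that no outer unit normal of $K$ at a point of $\Sigma$ can belong to $\operatorname{supp}S(K,\cdot)$. Indeed, if $x\in\Sigma$ and $\nu\in\Sn$ is an outer normal of $K$ at $x$, then $\langle x,\nu\rangle=h_K(\nu)$ and $x+\rho\nu\in z+2\rho B_2^n\subseteq L$; were $\nu\in\operatorname{supp}S(K,\cdot)$, the defining inclusion $L\subseteq H^-(\nu,h_K(\nu))$ would yield $\langle x+\rho\nu,\nu\rangle\le h_K(\nu)$, contradicting $\langle x+\rho\nu,\nu\rangle=h_K(\nu)+\rho$.

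Finally I would invoke the definition of the surface area measure. Put $W:=\Sn\setminus\operatorname{supp}S(K,\cdot)$; the previous step shows that every point of $\Sigma$ has some (in fact every) outer normal in the open set $W$, so $\Sigma\subseteq\{x\in\partial K:\exists v\in W,\ \langle x,v\rangle=h_K(v)\}$ and hence $S(K,W)\ge\Ha(\Sigma)>0$. On the other hand, $W$ is open and disjoint from the support of $S(K,\cdot)$, whence $S(K,W)=0$ — a contradiction. I expect the main obstacle to be precisely the bridge in the second paragraph: converting the soft fact that $z\in\inter L$ into the quantitative statement that a whole patch of positive surface-area measure is forced to have its normals outside $\operatorname{supp}S(K,\cdot)$. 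Once the auxiliary radius $2\rho$ is chosen so that the shifted point $x+\rho\nu$ remains in $L$, the remainder is a direct clash with the definition of $S(K,\cdot)$, and notably no boundedness of $L$ is needed.
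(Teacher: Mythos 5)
Your proof is correct, and it takes a genuinely different route from the paper: the paper does not argue from first principles at all, but simply cites Schneider's monograph (Theorem 2.2.6 together with Lemma 4.5.2), i.e., it derives the lemma from known structural facts relating a convex body to its supporting half-spaces and the support of $S(K,\cdot)$ to the normal vectors of $K$. You instead give a self-contained contradiction argument using only the paper's own definition of $S(K,\cdot)$ as the $\Ha$-measure of the reverse spherical image: the shift trick $x\mapsto x+\rho\nu$, which turns the soft statement $z\in\inter L$ into the exclusion of \emph{every} outer normal along the patch $\Sigma$ from $\operatorname{supp}S(K,\cdot)$, is exactly the right bridge, and the clash $S(K,W)\geq\Ha(\Sigma)>0$ versus $S(K,W)=0$ for the open set $W=\Sn\setminus\operatorname{supp}S(K,\cdot)$ is airtight. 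What each approach buys: yours is elementary, makes the lemma independent of the cited machinery, and (as you note) never uses boundedness of $L$; the paper's citation is shorter and leans on established results that also identify $\operatorname{supp}S(K,\cdot)$ more precisely.

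One parenthetical in your second paragraph is imprecise, though repairable and not a gap: near $z$, the boundary $\partial K$ need not be a graph over a \emph{supporting} hyperplane at $z$ (take $K=[0,1]^2$, $z=(1,1)$ and the supporting normal $u=(1,0)$: the top edge is parallel to $u$, so the ``graph'' over $u^\perp$ is multivalued), and the orthogonal projection of $\Sigma$ onto such a hyperplane need not cover a full $(n-1)$-ball centered at the projection of $z$. The standard fix is to write $\partial K$ near $z$ as a Lipschitz graph over the hyperplane orthogonal to the direction $p-z$ (which points into $\inter K$), or to use the radial map based at $p$, which is a locally bi-Lipschitz parametrization of $\partial K$ by $\Sn$; either way, every nonempty relatively open subset of $\partial K$ has positive $\Ha$-measure, so your conclusion $\Ha(\Sigma)>0$ stands.
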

\begin{proof} The proof follows (for instance) from \cite[Theorem 2.2.6 and Lemma 4.5.2 ]{schneider_2013}.
\end{proof}
Below, we state and prove a consequence of Lemma \ref{l-sch} and Proposition \ref{prop}.
\begin{lemma}\label{l-general}
Let $p<1$, $K$ be a convex body in $\Rn$ that contains the origin and $H$ be a proper subspace of $\Rn$. Set $S:=supp\, S(K,\cdot)\setminus H$ and $T:=\{v\in\Sn:h_K(v)=0\}$. If $S_p(K,\cdot)$ is concentrated in $H$, then there exist $u_0\in \Sn\cap H$ and $y\in H^\perp$, satisfying the following.
\begin{enumerate}
    \item[\textnormal{(i)}] $h_{K|H}(u_0)=0$.
    \item[\textnormal{(ii)}] $\langle u_0,v\rangle\geq 0$, for all $v\in T\cap H$.
    \item[\textnormal{(iii)}] $\langle u_0+y,v\rangle\geq 0$, for all $v\in S$.
\end{enumerate}
\end{lemma}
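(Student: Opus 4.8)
\noindent\emph{Plan.} Since $1-p>0$ and $dS_p(K,\cdot)=h_K^{1-p}\,dS(K,\cdot)$, the hypothesis that $S_p(K,\cdot)$ is concentrated in $H$ forces $h_K=0$ at every point of $S$: if $h_K(v_0)>0$ for some $v_0\in S$, then $h_K>0$ on a neighbourhood $U$ of $v_0$ with $U\cap H=\emptyset$, while $S(K,U)>0$ because $v_0$ lies in the support, so $S_p(K,U)>0$ with $U\subseteq\Sn\setminus H$, a contradiction. Hence $S\subseteq T$. Let $N:=\{v\in\Rn:h_K(v)\le0\}$ be the normal cone of $K$ at $o$; it is a closed convex cone with $N\cap\Sn=T$, and it is pointed (line-free), since $h_K(v)=h_K(-v)=0$ would give $K\subseteq v^\perp$. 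Note $S\subseteq T\subseteq N$. The three conclusions split: (i)--(ii) are internal to $H$ and concern $R_H:=N\cap H$ (as $h_{K|H}=h_K|_H$, condition (i) reads $u_0\in T\cap H=R_H\cap\Sn$), while (iii) concerns $S$ and the extra freedom $y\in H^\perp$.

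First I would prove $R_H\neq\{o\}$, which is what makes (i) attainable. Suppose $N\cap H=\{o\}$. Then $v\,|\,H^\perp\neq o$ for every $v\in T$ (else $v\in N\cap H$), and $o\notin\operatorname{conv}\{(v\,|\,H^\perp)/|v\,|\,H^\perp|:v\in T\}$: a vanishing convex combination would, after rescaling, give $z=\sum_i\mu_iv_i\in N$ with $\mu_i>0$ and $z\,|\,H^\perp=o$, i.e. $z\in N\cap H=\{o\}$, contradicting pointedness of $N$. Compactness of $T$ then yields $w\in H^\perp$ with $\langle w,v\,|\,H^\perp\rangle>0$ for all $v\in T$, in particular for all $v\in S$. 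On the other hand, the barycentre of $S(K,\cdot)$ is $o$, so projecting onto $H^\perp$ and using $v\,|\,H^\perp=o$ off $S$ gives $\int_S v\,|\,H^\perp\,dS(K,v)=o$; pairing with $w$ yields $0=\int_S\langle w,v\,|\,H^\perp\rangle\,dS(K,v)>0$, since $S(K,S)>0$ ($S$ being a non-empty relatively open subset of the support), a contradiction. (Alternatively, Lemma \ref{l-sch} and the boundedness of $K=\bigcap_v H^-(v,h_K(v))$ force $\{v\,|\,H^\perp:v\in S\}$ to positively span $H^\perp$, again incompatible with such a $w$.) Thus $R_H\neq\{o\}$.

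For (i)--(ii) I apply Proposition \ref{prop} to $R_H$ inside $H$: it is closed, convex, and not a subspace (being pointed and non-zero), so it yields a unit vector $u_0\in R_H\cap\Sn=T\cap H$ with $\langle u_0,v\rangle\ge0$ for all $v\in R_H\supseteq T\cap H$. For the next step I actually need $u_0$ \emph{strictly} positive on $R_H\setminus\{o\}$; such a $u_0$ is available by taking the foot $p_0$ of the perpendicular from $o$ to a compact base $B=R_H\cap\{\ell=1\}$ of $R_H$ (for any $\ell>0$ on $R_H\setminus\{o\}$), since the projection inequality gives $\langle p_0,b\rangle\ge|p_0|^2>0$ for all $b\in B$. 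After normalising, $u_0\in T\cap H$ satisfies (i), (ii), and $\langle u_0,\cdot\rangle>0$ on $R_H\setminus\{o\}$.

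It remains to find $y\in H^\perp$ with $\langle u_0+y,v\rangle\ge0$ for all $v\in S$; this is the crux. Setting $F(y):=\inf_{v\in\overline S}\langle u_0+y,v\rangle$ (the closure is harmless, the inequalities being closed), it suffices to show $\sup_{y\in H^\perp}F(y)>0$. The integrand is affine in $v$ and $\overline S$ is compact, so Sion's minimax theorem applied to $(y,c)\mapsto\langle u_0,c\,|\,H\rangle+\langle y,c\,|\,H^\perp\rangle$ on $H^\perp\times\operatorname{conv}\overline S$ gives
$$\sup_{y\in H^\perp}F(y)=\inf\{\langle u_0,c\rangle:c\in\operatorname{conv}(\overline S)\cap H\},$$
the inner supremum over $y$ being $+\infty$ unless $c\,|\,H^\perp=o$. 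Now $\operatorname{conv}(\overline S)\cap H$ is a compact subset of $N\cap H=R_H$ (convex combinations of elements of $T\subseteq N$ lying in $H$), and it avoids $o$ because $N$ is pointed and $\overline S\subseteq N\cap\Sn$. Since $\langle u_0,\cdot\rangle>0$ on $R_H\setminus\{o\}$, the infimum is a positive minimum (or $+\infty$ if the set is empty); hence $\sup_y F(y)>0$ and some $y$ satisfies (iii). I expect this last step to be the main obstacle: a merely supporting $u_0$, as furnished by Proposition \ref{prop}, could make the minimax value $0$ and unattained, so it is precisely the strict positivity of $u_0$ on $R_H\setminus\{o\}$, together with the pointedness of $N$, that secures a genuine $y$.
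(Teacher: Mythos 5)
Your proof is correct, but it runs dual to the paper's argument. The paper works on the primal side: using Lemma \ref{l-sch} it writes $K=C\cap P$ with $C=\bigcap_{v\in T}H^-(v,0)$, shows $H^\perp\cap C=\{o\}$ from boundedness of $K$, finds via Hahn--Banach a unit vector of $H$ supporting $C$, and then applies Proposition \ref{prop} to the projected cone $C|H$ --- this is where the paper's technical effort sits, since $C|H$ need not be pointed --- obtaining $u_0$ supporting $C|H$ with $-u_0\in C|H$; lifting $-u_0$ to $-u_0-y\in C$ then yields $y$ at once and in fact gives $\langle u_0+y,v\rangle\geq 0$ for \emph{all} $v\in T$, slightly more than (iii). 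You instead work with the polar object, the normal cone $N=\{v:h_K(v)\leq 0\}$ at $o$, whose pointedness is automatic because $K$ is full-dimensional; you establish $T\cap H\neq\emptyset$ by pairing the barycentre identity $\int_{\Sn} v\,dS(K,v)=o$ with a separating $w\in H^\perp$ (the paper obtains this instead as a byproduct of the Hahn--Banach step plus Proposition \ref{prop}), you replace Proposition \ref{prop} by the elementary nearest-point argument producing $u_0\in T\cap H$ with $\langle u_0,\cdot\rangle>0$ on $(N\cap H)\setminus\{o\}$ --- so your invocation of Proposition \ref{prop} for (i)--(ii) is actually superfluous, as this $u_0$ supersedes it --- and you produce $y$ via Sion's minimax, where strict positivity of $u_0$ together with compactness of $\operatorname{conv}(\overline{S})\cap H$ (which avoids $o$ by pointedness, since $\overline{S}\subseteq T\subseteq N\cap\Sn$) makes the value strictly positive. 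Note that you genuinely need the nontrivial direction of the minimax equality (weak duality gives the useless inequality $\sup\inf\leq\inf\sup$), and Sion's hypotheses do hold here since $\operatorname{conv}(\overline{S})$ is compact convex and the function is bilinear. Two small points you should make explicit: $S\neq\emptyset$ (needed for your barycentre contradiction), which follows because otherwise $S(K,\cdot)$ would be concentrated in the proper subspace $H$, contradicting Minkowski's theorem --- the same fact the paper cites --- and the existence of a functional $\ell$ strictly positive on $(N\cap H)\setminus\{o\}$ giving a compact base, which is standard for pointed closed cones (or avoidable by taking the nearest point to $o$ of $\operatorname{conv}(N\cap H\cap\Sn)$ directly). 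In exchange for the minimax step, your route dispenses with both Lemma \ref{l-sch} and Proposition \ref{prop}, while the paper's route yields the marginally stronger conclusion on all of $T$ and keeps the argument measure-free until the very end.
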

\begin{proof}
Notice that $S\neq\emptyset$, since $S(K,\cdot)$ cannot be concentrated in $H$. 
We first claim that $S\subseteq T$. That is, $h_K(v)=0$, for all $v\in S$ (in particular, this shows that $o\in\partial K$). 
Indeed, if this is not the case, then $h_K(v)>0$, for some $v\in S$. By continuity, there exists an open set $\Omega$ in $\Sn$, such that $(h_K)|\Omega $ is bounded away from zero. In fact, we can choose $\Omega$ to be contained in the open set $\Sn\setminus H$. Then, by the definition of the support of a measure, we have $S(K,\Omega)>0$, thus
$$S_p(K,\Omega)=\int_\Omega h_K(v)^{1-p}\, dS(K,v)>0,$$ 
which contradicts the fact that $S_p(K,\cdot)$ is concentrated in $H$.

Next, define the closed convex cone $$C:=\bigcap_{v\in T}H^-(v,0).$$Notice that by Lemma \ref{l-sch}, it holds
\begin{eqnarray}
K&=&\left(\bigcap_{v\in S}H^-(v,h_K(v))\right)\cap \left(\bigcap_{v\in \Sn\cap H}H^-(v,h_K(v))\right)\nonumber\\
&=&\left(\bigcap_{v\in S}H^-(v,0)\right)\cap \left(\bigcap_{v\in \Sn\cap H}H^-(v,h_K(v))\right)\nonumber\\
&\supseteq&\left(\bigcap_{v\in T}H^-(v,0)\right)\cap \left(\bigcap_{v\in \Sn\cap H}H^-(v,h_K(v))\right)\nonumber\\
&=&C\cap  \left(\bigcap_{v\in \Sn\cap H}H^-(v,h_K(v))\right)=:C\cap P.\nonumber
\end{eqnarray}
Since the reverse inclusion trivially holds true, we conclude that 
\begin{eqnarray}\label{eq-l-general}
K=C\cap P.
\end{eqnarray}
We claim that 
\begin{equation}\label{eq-disjoint}
H^\perp\cap C=\{o\}.    
\end{equation}
To see this, observe that since $C$ is a convex cone, for $y\in H^\perp\setminus\{o\}$, only the following cases are possible.
\begin{enumerate}[(a)]
\item $\{ry:r\geq 0\}\subseteq C$.
\item $\{ry:r\leq 0\}\subseteq C$.
\item $\R y\cap C=\{o\}$.
\end{enumerate}
If any of the two first cases occurs, since $\R y$ is (trivially) contained in the (unbounded) cylinder $P$, then by (\ref{eq-l-general}), either $\{ry:r\geq 0\}\subseteq K$ or $\{ry:r\leq 0\}\subseteq K$. This contradicts the fact that $K$ is bounded. Thus, we are left with case (c) and (\ref{eq-disjoint}) is proved.

Eqs (\ref{eq-disjoint}) and the Hahn-Banach Theorem show easily that there exists a supporting hyperplane $E$ (necessarily a subspace of $\Rn$) of $C$, that contains $H^\perp$. 
Notice that if $E=u^\perp$ for some $u\in\Sn$, then $u$ must be contained in $H$. Thus, we have found a unit vector $u\in H$ that supports $C$. 

Since $\langle u,x\rangle \leq 0$, for all $x\in C$, it follows that $\langle u,x\rangle \leq 0$, for all $x\in C|H$. Furthermore, (\ref{eq-l-general}) shows that $C|H\supseteq K|H$ and, therefore, $C|H$ is a closed convex cone of dimension $k=\dim H$. Thus, by Proposition \ref{prop}, there exists a unit vector $u_0\in H$, such that $u_0$ supports $C|H$  and $-u_0\in C|H$. However, $o\in K|H\subseteq C|H$, which immediately shows that $h_{K|H}(u_0)=0$. Thus, $u_0$ satisfies assertion (i). Furthermore, observe that there exists $y\in H^\perp$, such that if we set $w:=-u_0-y$, then $w\in C$. By the definition of $C$, this is equivalent to \begin{equation}\label{eq-l-general-last}\langle w,v\rangle\leq 0, \qquad \forall v\in T\end{equation}and, since $S\subseteq T$, $u_0$ and $y$ satisfy (iii). The fact that $u_0$ satisfies (ii) follows trivially from (\ref{eq-l-general-last}). Our proof is complete. 
\end{proof}
\noindent
\begin{proof}[{\it Proof of Theorem \ref{thm-main}.}]
Assume that there exists such $K$. Let $S,\ T, \ u_0,\ y$ be as in the statement of Lemma \ref{l-general} and set $z:=u_0+y$. Then, ${\cal H}^{k-1}(T\cap H)=0$, otherwise the support of $\mu_H$ would be strictly contained in $\Sn\cap H$. By the assumption of Theorem \ref{thm-main}, for any continuous function $\varphi:(\Sn\cap H)\setminus T\to\R$, it holds 
\begin{eqnarray}\label{eq-proof-final}\int_{(\Sn\cap H)\setminus T}\varphi(v)\, dS(K,v)&=&\int_{(\Sn\cap H)\setminus T}\varphi(v)h_K(v)^{-(1-p)}d\, {\cal H}^{k-1}(v).\nonumber\\
&=&\int_{\Sn\cap H}\varphi(v)h_K(v)^{-(1-p)}d\, {\cal H}^{k-1}(v).\end{eqnarray}
In particular, since $S(K,\cdot)$ is finite, (\ref{eq-proof-final}) shows that the function $h_K^{-(1-p)}$ is integrable on $\Sn\cap H$ and, therefore, the function $v\mapsto \langle u_0,v\rangle h_K(v)^{-(1-p)}$ is also integrable on  $\Sn\cap H$.

Using (\ref{eq-proof-final}), Lemma \ref{l->0}, Lemma \ref{l-general} (i) and Lemma \ref{l-general} (ii), we deduce
\begin{eqnarray}\label{eq-proof-4}
\int_{\Sn\cap H}\langle u_0,v\rangle\, dS(K,v)&=&\int_{T\cap H}\langle u_0,v\rangle\, dS(K,v)+\int_{(\Sn\cap H)\setminus T}\langle u_0,v\rangle\, dS(K,v)\nonumber \\
&\geq& \int_{(\Sn\cap H)\setminus T}\langle u_0,v\rangle\, dS(K,v)\nonumber \\
&=&\int_{\Sn\cap H}\langle u_0,v\rangle h_K(v)^{-(1-p)}\, d{\cal H}^{k-1}(v)\nonumber\\
&=&\int_{\Sn\cap H}\langle u_0,v\rangle h_{K|H}(v)^{-(1-p)}\, d{\cal H}^{k-1}(v)>0.
\end{eqnarray}

Finally, Lemma \ref{l-general} (iii) and (\ref{eq-proof-4}) give
\begin{eqnarray*}
\int_{\Sn}\langle z,v\rangle\, dS(K,v)&=&\int_{S}\langle z,v\rangle\, dS(K,v)+\int_{\Sn\cap H}\langle z,v\rangle\, dS(K,v)\\
&\geq& \int_{\Sn\cap H}\langle z,v\rangle\, dS(K,v)\\
&=& \int_{\Sn\cap H}\langle u_0,v\rangle\, dS(K,v)>0.
\end{eqnarray*}
This violates Minkowski's Existence and Uniqueness Theorem (in particular the part that states that the barycentre of $S(K,\cdot)$ is at the origin) and, therefore, there cannot be any convex body $K$ satisfying $S_p(K,\cdot)=\mu_H$. \end{proof}
\begin{remark}
The strategy to prove Theorem \ref{thm-main} was to arrive at a contradiction as follows: 
If $K$ is a convex body such that $S_P(K,\cdot)=\mu_H$, then the barycentre of $S(K,\cdot)$ is not at the origin. The same reasoning can be applied to the equation \begin{equation}\label{eq-firey}h_K^{1-p}dS(K,\cdot)=dS_p(K,\cdot)=d\Ha(\cdot), \qquad p\neq 1, \end{equation}to show that if a convex body $K$ that contains $o$ satisfies (\ref{eq-firey}), then $K$ contains $o$ in its interior. This is, of course, an immediate consequence of Lemma \ref{l->0}, if $p<1$ and of the analogue of Lemma \ref{l->0} in the case $q>0$, if $p>1$. We mention that in \cite{brendle_choi_daskalopoulos_2017}, the authors proved that a strictly convex body with $C^\infty$-smooth boundary (due to a result of Caffarelli \cite{caffarelli_1991}, this turns out to be equivalent to the fact that $K$ contains the origin in its interior) satisfying (\ref{eq-firey}), for some $p>-n-1$, must be a ball. Clearly, the previous discussion shows that the regularity assumption in their result can be omitted. An extension of the result in \cite{brendle_choi_daskalopoulos_2017} appears in \cite{saroglou_2021}, where $h_K^{1-p}$ in (\ref{eq-firey}) is replaced with $1/G(h_K)$ and $G:(0,\infty)\to(0,\infty)$ is an appropriate function. A similar argument as above shows that the assumption ``$K$ contains $o$ in its interior'' in the main result of \cite{saroglou_2021} can be also omitted (after some obvious modifications), if $G$ is assumed to be strictly monotone.
\end{remark}
\begin{remark}
One can easily prove (without using Proposition \ref{prop}; see the proof of Lemma \ref{l-general}) that if $S_p(K,\cdot)$ is concentrated in a $k$-dimensional subspace $H$ of $\Rn$ (where $p<1$ and $1\leq k\leq n-1$), then $o\in \partial (K|H)$. Because of this and since the support function of any $k$-dimensional convex body that contains the origin in its boundary, raised to the power -$k$, is never integrable on $\mathbb{S}^{k-1}$, the proof of Theorem \ref{thm-main} becomes much easier (and the result is much more expected) if we restrict ourselves to the case $p\leq1-k$. 
\end{remark}

\vspace{1.5 cm}

\noindent Christos Saroglou \\
Department of Mathematics\\
University of Ioannina\\
Ioannina, Greece, 45110 \\
E-mail address: csaroglou@uoi.gr \ \& \ christos.saroglou@gmail.com


\begin{thebibliography}{999}
{





\bibitem{bianchi_2019}
Gabriele Bianchi, K\'{a}roly~J. Boroczky, Andrea Colesanti, and Dean Yang.
\newblock The $L^p$-Minkowski problem for $-n<p<1$.
\newblock {\em Advances in Mathematics}, 341:493--535, 2019.


\bibitem{boroczky_lutwak_yang_zhang_2013}
Karoly~J. Boroczky, Erwin Lutwak, Deane Yang, and Gaoyong Zhang.
\newblock The logarithmic Minkowski problem.
\newblock {\em Journal of the American Mathematical Society}, 26(3):831--852,
  2013.

\bibitem{gauss} Karoly~J. Boroczky, Erwin Lutwak, Deane Yang, Gaoyong Zhang and Yiming Zhao, \newblock The Gauss Image Problem. \newblock{\em Communications in Pure and Applied Mathematics}, 73:1406--1452, 2020.
  
\bibitem{trinh} Karoly~J. Boroczky, Hai T. Trinh.\newblock The planar $L_p$-Minkowski problem for $0< p< 1$. \newblock{\em Advances in Applied Mathematics}, 87:58--81, 2017.

\bibitem{brendle_choi_daskalopoulos_2017}
Simon Brendle, Kyeongsu Choi, and Panagiota Daskalopoulos.
\newblock Asymptotic behavior of flows by powers of the Gaussian curvature.
\newblock {\em Acta Mathematica}, 219(1):1--16, 2017.

\bibitem{caffarelli_1991}
Luis~A. Caffarelli.
\newblock Some regularity properties of solutions of Monge-Ampere equation.
\newblock {\em Communications on Pure and Applied Mathematics},
  44(8-9):965--969, 1991.

\bibitem{chen_li_zhu_2017}
Shibing Chen, Qi-Rui Li, Guangxian Zhu,
\newblock On the $L_p$ Monge-Amp`ere equation,
\newblock{\em Journal of Differential Equations},
263(8):4997--5011, 2017.

\bibitem{chen_li_zhu_2019}
Shibing Chen, Qi-Rui Li, Guangxian Zhu,
\newblock The logarithmic Minkowski problem for non-symmetric measures.
\newblock{\em Transactions of the American Mathematical Society}, 371(4):2623--2641, 2019.



\bibitem{chou_wang_2006}
Kai-Seng Chou and Xu-Jia Wang.
\newblock The $L^p$-Minkowski problem and the Minkowski problem in centroaffine
  geometry.
\newblock {\em Advances in Mathematics}, 205(1):33--83, 2006.




\bibitem{du} Shi-Zhong Du, \newblock On the planar $L_p$-Minkowski problem. \newblock{\em Journal of Differential Equations}, 287:37--77, 2021.


\bibitem{gardner_2006}
Richard~J. Gardner.
\newblock {\em Geometric Tomography}.
\newblock Encyclopedia of Mathematics and its Applications. Cambridge
University Press, 2nd edition, 2006.



\bibitem{haberl_lyz}
Christoph Haberl, Erwin Lutwak, Deane Yang, and Gaoyong Zhang.
\newblock The even Orlicz Minkowski problem.
\newblock {\em Advances in Mathematics}, 224(6):2485--2510, 2010.


\bibitem{huang_lyz}
Yong Huang, Erwin Lutwak, Deane Yang, and Gaoyong Zhang.
\newblock Geometric measures in the dual Brunn-Minkowski theory and their
  associated Minkowski problems.
\newblock {\em Acta Mathematica}, 216(2):325--388, 2016.

\bibitem{hug_lutwak_yang_zhang_2005} Daniel  Hug, Erwin Lutwak, Dean Yang and Gaoyong Zhang, \newblock On the $L_p$ Minkowski problem for polytopes. \newblock {\em Discrete and Computational Geometry}, 33:699--715, 2005.





\bibitem{lutwak_1993}
Erwin Lutwak.
\newblock The Brunn-Minkowski-Firey theory. I. Mixed volumes and the Minkowski
  problem.
\newblock {\em Journal of Differential Geometry}, 38(1):131--150, 1993.









\bibitem{saroglou_2021}
Christos Saroglou. \newblock On a non-homogeneous version of a problem of Firey. \newblock{\em Mathematische Annalen} (2021). https://doi.org/10.1007/s00208-021-02225-3

\bibitem{schneider_2013}
Rolf Schneider.
\newblock {\em Convex Bodies: The Brunn-Minkowski Theory}.
\newblock Encyclopedia of Mathematics and its Applications. Cambridge
University Press, 2nd edition, 2013.







\bibitem{zhu_2017} Guangxian Zhu.
\newblock The $L_p$ Minkowski problem for polytopes for $p<0$.
\newblock {\em Indiana University Mathematics Journal},
66(4):1333--1350, 2017.}

\end{thebibliography}
\end{document}